\newtheorem{proposition}{Proposition}
\newenvironment{proof}{{\bf Proof:}}{$\text{ }\blacksquare$}
\begin{document}

\title{Topological characterization of various types of $C^\infty$-rings}
\author{Dennis Borisov\\ Max-Planck Institute for Mathematics, Bonn, Germany\\ dennis.borisov@gmail.com}
\date{\today}
\maketitle

\begin{abstract} 
\noindent Topologies on algebraic and equational theories are used to define germ determined, near-point determined, and point determined $\mathcal C^\infty$-rings, without requiring them to be finitely generated. It is proved, that any $\mathbb R$-algebra morphism (without requiring continuity) into a near-point determined $\mathcal C^\infty$-ring is a $\mathcal C^\infty$-morphism (and hence continuous).\end{abstract}

\section{Introduction}

When using algebraic-geometric approach to deal with smooth manifolds and singular $C^\infty$-spaces, one works with $\mathcal C^\infty$-rings, i.e. rings having not only polynomial operations (as it is for the commutative rings), but all possible smooth functions as operations (\cite{Du81},\cite{GS03},\cite{Jo11},\cite{MR91},\cite{Sp10}). In this framework, it is very important to choose correctly the right subcategory of the category $\mathcal L$ of all $\mathcal C^\infty$-rings, so as to avoid unwanted anomalies.

This choice is usually made using topological or analytic properties of the rings $C^\infty(\mathbb R^n)$ for all $n\geq 0$ (\cite{Wh48},\cite{Du81},\cite{MR91}). In this paper we describe the systematic way to do so, and as a result we obtain the categories of germ determined, near-point determined, and point determined $\mathcal C^\infty$-rings, without requiring the rings to be finitely generated. We show that near-point determined $\mathcal C^\infty$-rings are very close to commutative $\mathbb R$-algebras, in that every $\mathbb R$-algebra morphism (without requiring continuity) into a near-point determined $\mathcal C^\infty$-ring is automatically a $\mathcal C^\infty$-morphism (compare \cite{Re75},\cite{GS03}).

\smallskip

Our approach is based on the notion of {\it a semi-topological theory}, which is a theory together with a topology, s.t. composition of operations is separately continuous in each variable. We define $3$ topologies on the theory of smooth functions $\mathcal C^\infty$:  basic open sets for each one of the topologies are obtained by fixing germs or finite jets or values of functions at finite sets of points. We show that closed ideals in these topologies are precisely the germ determined, near-point determined, and point determined ones respectively.

Since we define topology on the theory itself, and not directly on rings, we do not need to require the rings to be finitely generated. Just from comparing the topologies, we obtain a chain of full reflective subcategories $\mathcal E\subset\mathcal F\subset\mathcal G\subset\mathcal L$ of point determined, near-point determined, and germ determined $\mathcal C^\infty$-rings respectively.

\smallskip

Here is the structure of the paper: in section \ref{Theories} we define three topologies on the theory of smooth functions, and prove that each one of them makes $\mathcal C^\infty$ into a semi-topological theory. In section \ref{Algebras} we use the notion of a natural topology on an algebra to single out Hausdorff algebras. We apply this to the germ, jet, and point topologies to obtain the categories $\mathcal G, \mathcal F,\mathcal E$. Finally we prove that an $\mathbb R$-algebra morphism into $A\in\mathcal F$ is always a $\mathcal C^\infty$-morphism.

\smallskip

{\it Acknowledgements:} This work was done to clarify questions, that arose during the seminar on Derived Differential Geometry at Max-Planck-Institut f\"{u}r Mathematik in Bonn, during Summer 2011. The author would like to thank the participants, especially Christian Blohmann, Justin Noel, and Ansgar Schneider, for many fruitful discussions.

%%%%%%%%%%%%%%%%%%%%%%%%%%%%%%%%%%%%%%%%%%%%%%%%%%%%%%%%%%%%%%%%%%%%%%%%%%%%%%%%%%%%%%%%
\section{Semi-topological theories of smooth functions}\label{Theories}

Recall that \underline{an algebraic theory} (\cite{La63}) is given by a small category $\mathbb T$, having all finite direct products, s.t. every object in $\mathbb T$ is a {\it finite} cartesian power of one chosen $T\in\mathbb T$. A $\mathbb T$-algebra in a category $\mathcal M$ is a product-preserving functor $\mathbb T\rightarrow\mathcal M$. We will denote the category of $\mathbb T$-algebras in $\mathcal M$ by $\mathbb T(\mathcal M)$ (morphisms between algebras are natural transformations).

 \underline{A morphism between algebraic theories} is a functor $\mathbb T\rightarrow\mathbb T'$, that preserves finite products and maps $T$ to $T'$. It is clear, that any such morphism induces a functor $\mathbb T'(\mathcal M)\rightarrow\mathbb T(\mathcal M)$.

\underline{The theory of smooth functions} $\mathcal C^\infty$ (\cite{Du81}) has $\{\mathbb R^n\}_{n\geq 0}$ as objects, and smooth maps between them as morphisms. We will follow established terminology: $\mathcal C^\infty$-algebras in $Set$ will be called \underline{$\mathcal C^\infty$-rings}, the category of such rings will be denoted by $\mathcal L$ (\cite{MR91}).\footnote{Note that, different from \cite{MR91}, we do not assume rings to be finitely generated.} It is well known (e.g. \cite{MR91}) that $\mathcal L$ contains the category of smooth manifolds as a full subcategory.

\smallskip

Let $\mathbb T$ be an algebraic theory. With some assumptions on $\mathcal M$ (\cite{BD80}), e.g. $\mathcal M$ being cartesian closed, the forgetful functor $\mathbb T(\mathcal M)\rightarrow\mathcal M$ has a left adjoint, i.e. there are free $\mathbb T$-algebras. For an $S\in Set$, the free $\mathbb T$-algebra in $\mathbb T(Set)$, generated by $S$, will be denoted by $\mathbb T(S)$. It is straightforward to check, that a free $\mathbb T$-algebra on $n$ generators ($n\in\mathbb Z_{\geq 0}$) is just $Hom(T^{\times^n},T)$. %\begin{proof} Let $l,m\geq 0$, and define $$Hom(T^{\times^l},T^{\times^m})\rightarrow Hom(Hom(T^{\times^n},T)^{\times^l},Hom(T^{\times^n},T)^{\times^m})$$ as follows. Let $\phi_1,\ldots,\phi_l\in Hom(T^{\times^n},T)$, they define $\phi:T^{\times^n}\rightarrow T^{\times^l}$. Having $\alpha\in Hom(T^{\times^l},T^{\times^m})$, we get $\alpha\circ\phi:T^{\times^n}\rightarrow T^{\times^m}$, or equivalently $(\alpha\circ\phi)_1,\ldots,(\alpha\circ\phi)_m\in Hom(T^{\times^n},T)$. It is clear that $Hom(T^{\times^n},T)$ is a left module over $\mathbb T$, and hence we have defined a functor $\mathbb T\rightarrow Set$. To prove that it is product preserving, we need to show that it preserves projections and diagonals. Both statements are obvious. It is clear that projections $\pi_i:T^{\times^n}\rightarrow T$ generate $Hom(T^{\times^n},T)$ as a $\mathbb T$-algebra. Let $A$ be any other $\mathbb T$-algebra, and let $a_1,\ldots,a_n\in A$ be some elements. We claim that the map $\pi_i\mapsto a_i$ extends to a morphism of $\mathbb T$-algebras $Hom(T^{\times^n},T)\rightarrow A$. Indeed, this is just evaluating $$Hom(T^{\times^n},T)\rightarrow Hom(A^{\times^n},A)$$ on $a_1,\ldots,a_n$. So $Hom(T^{\times^n},T)$ is a free $\mathbb T$-algebra on $n$ generators.\end{proof}

For example, a free $\mathcal C^\infty$-ring on $n$ generators is isomorphic to $C^\infty(\mathbb R^n)$ (e.g. \cite{MR91}). A free $\mathcal C^\infty$-ring on an infinite set of generators is a colimit (in $Set$) of a diagram of free finitely generated $\mathcal C^\infty$-rings and inclusions.

\smallskip

An algebraic theory consists of finitary operations, i.e. operations that have only finitely many inputs. In $\mathcal C^\infty$, for example, a typical operation is a smooth function $f:\mathbb R^n\rightarrow\mathbb R$. Even within a finite theory one encounters the need to consider operations with infinitely many inputs. For example, the free $\mathcal C^\infty$-ring, generated by a not necessarily finite set $S$, is the ring $C^\infty(\mathbb R^S)$ of smooth functions on $\mathbb R^S$.\footnote{Here, by a smooth function on we mean a function that factors through a projection $\mathbb R^S\rightarrow\mathbb R^n$, and a smooth function $\mathbb R^n\rightarrow\mathbb R$.}

To take care of such operations we use the notion of \underline{an equational theory} (\cite{Li66}),\footnote{What we call an equational theory here, is called a {\it varietal} equational theory in \cite{Li66}.} which is a category $\mathfrak T$, having all small direct products, s.t. every object in $\mathfrak T$ is a cartesian power of one chosen $T\in\mathfrak T$. \underline{A $\mathfrak T$-algebra} in a category $\mathcal M$ is a product preserving functor $\mathfrak T\rightarrow\mathcal M$.
We denote by $\mathfrak T(\mathcal M)$ the category of $\mathfrak T$-algebras in $\mathcal M$. Given two equational theories $\mathfrak T$, $\mathfrak T'$, a morphism $\mathfrak T\rightarrow\mathfrak T'$ is a product preserving functor, that maps $T$ to $T'$. For any $S\in Set$, the free $\mathfrak T$-algebra, generated by $S$, will be denoted by $\mathfrak T(S)$. It is easy to see (\cite{Li66}) that $\mathfrak T(S)\simeq Hom(T^{\times^{|S|}},T)$.

From any equational theory $\mathfrak T$, one can extract an algebraic sub-theory, which is the full subcategory $\mathbb T\subset\mathfrak T$, consisting of {\it finite} powers of $T$. Clearly this defines a functor
	\begin{equation}\label{InfiniteFinite}\xymatrix{\text{Equational theories }\ar[rr] && \text{ Algebraic theories.}}\end{equation}
It is known (\cite{Li66}), that the category of equational theories is equivalent to the category of monads on $Set$. Since every algebraic theory defines a monad, we have the following proposition.
\begin{proposition}\label{Completion} The functor (\ref{InfiniteFinite}) has a left adjoint, that will be denoted by  $\mathbb T\mapsto\underline{\mathbb T}$, s.t. $\forall S\in Set$
	\begin{equation} Hom_{\underline{\mathbb T}}(T^{\times^{|S|}},T)\simeq\mathbb T(S).\end{equation}
\end{proposition}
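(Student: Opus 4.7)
The plan is to work via the equivalence of \cite{Li66} between equational theories and monads on $Set$. Under this equivalence an equational theory $\mathfrak{T}$ corresponds to the monad $S\mapsto\mathfrak{T}(S)=Hom_{\mathfrak{T}}(T^{\times|S|},T)$, and the forgetful functor (\ref{InfiniteFinite}) corresponds to restricting a monad to finite cardinalities. Algebraic theories are thereby identified with the finitary monads on $Set$, i.e.\ those whose underlying endofunctor preserves filtered colimits. The proposition should then reduce to an assertion about monads.

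Given this setup, I would construct $\underline{\mathbb{T}}$ by extending the finitary monad of $\mathbb{T}$ to all of $Set$ in the unique way compatible with preservation of filtered colimits. Concretely, define
\[
\underline{F}_{\mathbb{T}}(S) \;:=\; \mathrm{colim}_{S_0\subseteq S \text{ finite}} \mathbb{T}(S_0),
\]
with unit and multiplication inherited from $\mathbb{T}$, and let $\underline{\mathbb{T}}$ be the equational theory associated to $\underline{F}_{\mathbb{T}}$. The required formula $Hom_{\underline{\mathbb{T}}}(T^{\times|S|},T)\simeq\mathbb{T}(S)$ is then built into the construction: for finite $S$ the colimit is $\mathbb{T}(S)$ itself, and for infinite $S$ it is what we declared $\underline{F}_{\mathbb{T}}(S)$ to be.

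For the adjunction, let $\mathfrak{T}'$ be any equational theory, with associated monad $F_{\mathfrak{T}'}$. Given a morphism of algebraic theories $\mathbb{T}\to\mathfrak{T}'|_{\mathrm{fin}}$, I would produce the corresponding equational morphism $\underline{\mathbb{T}}\to\mathfrak{T}'$ as the unique monad morphism $\underline{F}_{\mathbb{T}}\to F_{\mathfrak{T}'}$ whose components on finite sets are the given ones. Existence and uniqueness follow from the universal property of the filtered colimit defining $\underline{F}_{\mathbb{T}}(S)$, together with the fact that any natural transformation out of a finitary functor is rigidly determined by its restriction to finite cardinalities; naturality in $\mathfrak{T}'$ and compatibility with composition of theory morphisms are formal.

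The main obstacle is checking that $\underline{F}_{\mathbb{T}}$ genuinely carries a monad structure — in particular, that the multiplication defined from composition in $\mathbb{T}$ descends to the colimits. The key observation is that if $\phi\in\mathbb{T}(S_0)$ for finite $S_0\subseteq S$, and $(\psi_s)_{s\in S_0}$ with $\psi_s\in\mathbb{T}(S'_{0,s})$ for finite $S'_{0,s}\subseteq S'$, then the composite naturally lives in $\mathbb{T}(S'_0)$ for $S'_0:=\bigcup_{s\in S_0} S'_{0,s}$, which is still finite as a finite union of finite sets. Hence the would-be multiplication factors through the colimit, and associativity and unit axioms are then inherited directly from $\mathbb{T}$. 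Everything else in the proof is routine unwinding of definitions.
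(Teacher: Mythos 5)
Your proposal is correct and follows essentially the same route as the paper, which proves the proposition simply by invoking Linton's equivalence between equational theories and monads on $Set$ together with the fact that every algebraic theory defines a (finitary) monad via its free-algebra functor. Your explicit filtered-colimit construction of $\underline{F}_{\mathbb T}(S)$ and the verification that multiplication descends are just the unwound details of that same argument.
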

It follows immediately, that for any category $\mathcal M$, having all small direct products, the categories $\mathbb T(\mathcal M)$ and $\underline{\mathbb T}(\mathcal M)$ are naturally equivalent. %\begin{proof} The functor $\underline{\mathbb T}(\mathcal M)\rightarrow\mathbb T(\mathcal M)$ is obvious (restriction). To see the functor in the opposite direction, note that any $M\in\mathcal M$ defines an equational theory, which is the full subcategory of $\mathcal M$, consisting of all small direct powers of $M$. Therefore, any product preserving functor $\mathbb T\rightarrow\mathcal M$ has a unique extension to $\underline{\mathbb T}\rightarrow\mathcal M$. Since morphisms in $\underline{\mathbb T}$ are colimits of morphisms in $\mathbb T$, this extension is a functor $\mathbb T(\mathcal M)\rightarrow\underline{\mathbb T}(\mathcal M)$. Uniqueness of this extension implies that this functor is an equivalence of categories.\end{proof}
So, by switching from $\mathcal C^\infty$ to $\underline{\mathcal C}^\infty$ we do not get anything new: $\underline{\mathcal C}^\infty$-algebras are just $\mathcal C^\infty$-rings. However, $\mathcal C^\infty$, in addition to algebraic structure, has rich topological and analytic properties. Extending them to $\underline{\mathcal C}^\infty$ does produce something new.

\smallskip

To deal with topology on $\mathcal C^\infty$, we need the notion of algebraic theories enriched in $Top$. Explicitly, \underline{a topological-algebraic theory} (\cite{BV73}) is a pair $(\mathbb T,\tau)$, where $\mathbb T$ is an algebraic theory, and $\tau$ is a topology on $Hom(T^{\times^m},T^{\times^n})$, $\forall m,n\geq 0$, s.t.\begin{itemize}
\item[1.] $Hom(T^{\times^m},T^{\times^n})\simeq Hom(T^{\times^m},T)^{\times^n}$ as topological spaces,
\item[2.] for any $l,m,n\geq 0$, the composition map 
	\begin{equation}\label{TopTheory}Hom(T^{\times^l},T^{\times^m})\times Hom(T^{\times^m},T^{\times^n})\rightarrow 
	Hom(T^{\times^l},T^{\times^n})\end{equation}
is continuous.\end{itemize}

On $\mathcal C^\infty$ there are several interesting topologies. There is the well known \underline{Whitney topology} (\cite{Wh48}), given by supremum norms on functions and their derivatives over compacts. %\begin{proof} Let $f\in C^\infty(\mathbb R^l,\mathbb R^m)$, $g\in C^\infty(\mathbb R^m,\mathbb R^n)$, and let $U\ni g\circ f$ be an open set with respect to any topology. Since $C^\infty(\mathbb R^l,\mathbb R^n)\simeq C^\infty(\mathbb R^l)^{\times^n}$ as topological spaces, there are open neighborhoods $U_i\ni g_i\circ f$, $1\leq i\leq n$, s.t. $U_1\times\ldots\times U_n\subseteq U$. If we find open neighborhoods $V_i\ni f$, $W_i\ni g_i$, s.t. $W_i\circ V_i\subseteq U_i$, then clearly $(\underset{1\leq i\leq n}\prod W_i)\circ (\underset{1\leq i\leq n}\bigcap V_i)\subseteq U$, and hence we can assume that $n=1$.\\ {\bf Whitney topology} We need to show that composition of operations is continuous. Let $k,n\in\mathbb Z_{\geq 1}$, $l,m\in\mathbb Z_{\geq 0}$. For any $f_i\in C^\infty(\mathbb R^{m})$, $1\leq i\leq n$, $f_0\in C^\infty(\mathbb R^n)$, and any $\epsilon>0$, let $f:=f_0\circ(f_1\times\ldots\times f_{n})$, and define $$W_{k,l,\epsilon}:=\{g\in C^\infty(\mathbb R^{m})\text{ s.t. }\nu_{k,l}(f-g)<\epsilon\},$$ where $\nu_{k,l}$ is the semi-norm, given by supremum norms of derivatives up to order $l$, over the closed ball $B_k$ of radius $k$. Let $k'$ be the smallest integer, greater than $n+\underset{p\in B_{k}}{sup}||(f_1\times\ldots\times f_{m_0})(p)||$. Choose $1>\delta>0$, and define, $$U_{k,l,\delta}:=\{g\in C^\infty(\mathbb R^m)\text{ s.t. }\nu_{k,l}(g-f_i)<\delta\text{ for all }1\leq i\leq n\}.$$ Then evaluation maps $U_{k,l,\delta}\times B_{k}\rightarrow B_{k'}$. Now define $$V_{k',l,\delta}:=\{g\in C^\infty(\mathbb R^{n})\text{ s.t. }\nu_{k',l}(g-f_0)<\delta\}.$$ Let $$\mu:=max(1,\nu_{k',l}(f_0),\nu_{k,l}(f_1),\ldots,\nu_{k,m_{l}}(f_{n})),$$ we claim that for any $g_i\in U_{k,l,\delta}$, $1\leq i\leq n$, $g_0\in V_{k',l,\delta}$\begin{equation}\label{Estimate}\nu_{k,l}(g_0(g_1\times\ldots\times g_n)-f)<(2ln)^l\mu^{l-1}\delta.\end{equation} Indeed, denote $g:=g_0(g_1\times\ldots\times g_n)$, and let $\alpha$ be a multi-index, s.t. $|\alpha|\leq l$, then $\partial^\alpha(g)$ is a sum of no more than $(ln)^l$ summands, each one being a product of no more than $l$ factors, with each factor being $\partial^\beta(g_j)$ for some $0\leq j\leq n$ and some multi-index $\beta$ with $|\beta|\leq l$. Since $|\partial^\beta(g_i)-\partial^\beta(f_i)|<\delta$, we have that $\partial^\alpha(g)-\partial^\alpha(f)$ is a sum of no more than $2^l(ln)^l$ summands, each one having absolute value less than $\mu^{l-1}\delta$, and (\ref{Estimate}) is proved. From (\ref{Estimate}) it follows that, if we choose $0<\delta<\frac{\epsilon}{(2ln)^l\mu^{l-1}}$, then the composition law maps $U_{k,l,\delta}\times V_{k',l,\delta}\rightarrow W_{k,l,\epsilon}$.\end{proof}
We will define $3$ others, where basic open sets are given by fixing germs or finite jets or values of functions at finite sets of points. These topologies have nice extensions to $\underline{\mathcal C}^\infty$, which are not, what one would call topological equational theories, but something weaker.

\underline{A semi-topological equational theory} is an equational theory $\mathfrak T$, together with a topology $\tau$ on each $Hom(T^{\times^{|S_1|}},T^{\times^{|S_2|}})$, s.t.
	\begin{equation} Hom(T^{\times^{|S_1|}},T^{\times^{|S_2|}})\simeq Hom(T^{\times^{|S_1|}},T)^{\times^{|S_2|}}\end{equation}
as topological spaces, and the composition map 
	\begin{equation}\label{CompositionMap}Hom(T^{\times^{|S_1|}},T^{\times^{|S_2|}})\times Hom(T^{\times^{|S_2|}},T^{\times^{|S_3|}})\rightarrow Hom(T^{\times^{|S_1|}},T^{\times^{|S_3|}})
	\end{equation}
is separately continuous in each variable.

To define topology on $\underline{\mathcal C}^\infty$ we need to work with smooth functions on infinite dimensional real spaces. Let $S$ be a set, not necessarily finite. Let $\mathbb R^S$ be the $\mathbb R$-vector space of functions $S\rightarrow\mathbb R$. \underline{A smooth function} on $\mathbb R^S$ is a function that factors through projection on a finite dimensional summand $\mathbb R^S\rightarrow\mathbb R^n$, $n\in\mathbb Z_{\geq 0}$, and a smooth function $\mathbb R^n\rightarrow\mathbb R$. We denote by $C^\infty(\mathbb R^S)$ the $\mathcal C^\infty$-ring of smooth functions on $\mathbb R^S$. It is easy to see that $C^\infty(\mathbb R^S)$ is precisely the free $\mathcal C^\infty$-ring, generated by $S$. %\begin{proof} First we claim that $C^\infty(\mathbb R^S)$ is a $\mathcal C^\infty$-ring. Let $\alpha\in C^\infty(\mathbb R^n)$, and let $f_1,\ldots, f_n\in C^\infty(\mathbb R^S)$. Since there are only finitely many $f_i$'s, there is $\mathbb R^k\subseteq\mathbb R^S$, s.t. all $f_i$'s factor through $\mathbb R^k$. Now we can apply $\alpha$ to $C^\infty(\mathbb R^k)$. We claim that the result is independent of the choice of $\mathbb R^k$. It is enough to show that substituting $\mathbb R^k$ with $\mathbb R^{k'}\supseteq \mathbb R^k$ would not change the result. This follows from the fact that the projection $\mathbb R^{k'}\rightarrow\mathbb R^k$ is a smooth map. We claim that this defines a functor $\mathcal C^\infty\rightarrow Set$. To prove this we need to show that if $\alpha=\beta\circ(\gamma_1,\ldots,\gamma_m)$, then action of $\alpha$ on $f_i$'s is the composition of actions of $\gamma_j$'s and $\beta$. Since all the functions involved factor through $\mathbb R^k$, this follows from the fact that $C^\infty(\mathbb R^k)$ is a $\mathcal C^\infty$-ring. We claim that the resulting functor preserves finite direct products, i.e. for any $n\geq 2$, projections $\mathbb R^n\rightarrow\mathbb R$ are mapped to projections. Again, this follows from the fact that for a finite $S$, $\mathbb R^S$ is a $\mathcal C^\infty$-ring. Finally, we claim that $C^\infty(\mathbb R^S)$ is the free $\mathcal C^\infty$-ring, generated by $S$. The generators here are the projections $\mathbb R^S\rightarrow\mathbb R_s$ for each $s\in S$ (i.e. taking the value at $s$ of the function $S\rightarrow\mathbb R$). Since each smooth function on $\mathbb R^S$ factors through a finite dimensional summand, it is clear that these are indeed generators. We claim that the corresponding map $\mathcal C^\infty(S)\rightarrow C^\infty(\mathbb R^S)$ is not only surjective, but also injective. Indeed, any element of the kernel would have to be some $\alpha$, applied to some generators. Taking the corresponding coordinates on $\mathbb R^S$, we see that injectivity follows from the fact that $C^\infty(\mathbb R^n)$ is the free $\mathcal C^\infty$-ring on $n$ generators.\end{proof}

Two functions $f,g\in C^\infty(\mathbb R^S)$ \underline{have the same germ} at a point $p\in\mathbb R^S$, if there is $\mathbb R^n\subseteq\mathbb R^S$, s.t. $f$, $g$ factor through $\pi:\mathbb R^S\rightarrow\mathbb R^n$, $f|_{\mathbb R^n},g|_{\mathbb R^n}:\mathbb R^n\rightarrow\mathbb R$, and the germs of $f|_{\mathbb R^n},g|_{\mathbb R^n}$ at $\pi(p)$ are equal. It is clear that, in this case, for any finite dimensional subspace, containing $\mathbb R^n$, restrictions of $f,g$ have the same germ at the corresponding projection of $p$. Therefore, having the same germ at a given point is an equivalence relation, and, as usual, we will denote the germ of $f$ at $p$ by $f_p$. Clearly, if $S$ is finite, our notion of $f_p$ coincides with the standard one.

Two functions $f,g$ \underline{have the same $k$-jet} at $p$, if there is $\mathbb R^n\subseteq\mathbb R^S$, s.t. $f$, $g$ factor through $\pi:\mathbb R^S\rightarrow\mathbb R^n$, $f|_{\mathbb R^n},g|_{\mathbb R^n}:\mathbb R^n\rightarrow\mathbb R$, and the $k$-jets of $f|_{\mathbb R^n},g|_{\mathbb R^n}$ at $\pi(p)$ are equal. As with germs, it is easy to see that $\mathbb R^n$ can be enlarged, and still restrictions of $f,g$ will have the same $k$-jet at the corresponding projection of $p$. Therefore, having the same $k$-jet at $p$ is an equivalence relation, and we will denote the equivalence class of $f$ by $J_p^k(f)$. For a finite $S$, this notion coincides with the usual one.

\smallskip

For any $p\in\mathbb R^S$ there are several ideals of interest:\begin{itemize}
\item[1.] The ideal $\mathfrak m^g_p\subset C^\infty(\mathbb R^S)$ consists of functions, whose germ at $p$ is the same as that of $0$.
\item[2.] The ideal $\mathfrak m^k_p\subset C^\infty(\mathbb R^S)$ consists of functions, whose $k$-jet at $p$ is the same as that of $0$.
\item[3.] The ideal $\mathfrak m_p$ consists of functions, whose value at $p$ is $0$.\end{itemize}
It is obvious, that $\mathfrak m_p^k$ is indeed the $k$-th power of $\mathfrak m_p$. It is also easy to see, that $f_p=g_p$ if and only if $f-g\in\mathfrak m_p^g$, and similarly for equality of jets.

\smallskip

Now, for any $S\in Set$ we define:\begin{itemize}
\item[1.] A basis of the \underline{germ topology} is $\mathfrak U:=\{\emptyset\}\cup\{U_{\overline{p},f}\}$, where $\overline{p}=\{p_i\}$ is a finite set of points in $\mathbb R^S$, $f\in C^\infty(\mathbb R^S)$, and 
	\begin{equation} U_{\overline{p},f}:=\{g\in C^\infty(\mathbb R^S)\text{ s.t. } \forall i\text{ }f_{p_i}=g_{p_i}\}.\end{equation}
\item[2.] A basis of the \underline{jet topology} is $\mathfrak V:=\{\emptyset\}\cup\{V_{\overline{p},\overline{k},f}\}$, where $\overline{p}=\{p_i\}$ is a finite set of points in $\mathbb R^S$, $\overline{k}=\{k_i\}$ is a set of non-negative integers, one for each $p_i\in\overline{p}$, $f\in C^\infty(\mathbb R^S)$, and
	\begin{equation}\label{JetBasis} V_{\overline{p},\overline{k},f}:=\{g\in C^\infty(\mathbb R^S)\text{ s.t. }\forall i\text{ }J^{k_i}_{p_i}(f)=J^{k_i}_{p_i}(g)\}.\end{equation}
\item[3.] A basis of the \underline{point topology} is $\mathfrak W:=\{\emptyset\}\cup\{W_{\overline{p},f}\}$, where 
	\begin{equation}W_{\overline{p},f}:=\{g\in C^\infty(\mathbb R^S)\text{ s.t. }\forall i\text{ }g(p_i)=f(p_i)\}.\end{equation}\end{itemize}
First we prove that these are indeed bases of topologies.
\begin{proposition} Each one of the families $\mathfrak U,\mathfrak V,\mathfrak W$ is closed with respect to taking finite intersections.\end{proposition}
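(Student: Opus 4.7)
The plan is to reduce to binary intersections (the case of $n$ basis elements follows by a one-line induction), and in each of the three cases to use the following trick: if the intersection $B_1 \cap B_2$ is empty we are done, since $\emptyset$ is included in the family by convention; otherwise we pick any witness $h \in B_1 \cap B_2$ and show that the intersection equals a single basis element whose distinguished function is $h$ and whose prescribed data is the amalgamation of the data of $B_1$ and $B_2$.

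For the germ topology, given $U_{\overline p,f} \cap U_{\overline q,g} \neq \emptyset$ and any $h$ in the intersection, I would claim
\begin{equation*}
U_{\overline p,f} \cap U_{\overline q,g} \;=\; U_{\overline p \cup \overline q,\, h}.
\end{equation*}
Forward inclusion: if $k$ has the same germ as $h$ at every point of $\overline p\cup\overline q$, then at each $p_i$ we get $k_{p_i}=h_{p_i}=f_{p_i}$, and similarly at each $q_j$. Reverse inclusion: if $k_{p_i}=f_{p_i}=h_{p_i}$ and $k_{q_j}=g_{q_j}=h_{q_j}$, then $k$ agrees in germ with $h$ on $\overline p\cup\overline q$. (Consistency at points lying in both $\overline p$ and $\overline q$ is automatic once the intersection is assumed nonempty.) The argument for $\mathfrak W$ is identical verbatim, with values replacing germs.

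For the jet topology there is an extra bit of bookkeeping since each point carries an order. Given $V_{\overline p,\overline k,f} \cap V_{\overline q,\overline l,g}$ nonempty with witness $h$, set $\overline r := \overline p \cup \overline q$ and for $r\in\overline r$ define $m_r$ to be the maximum of the orders prescribed at $r$ (taking the single prescribed order if $r$ appears on only one side). I would then show
\begin{equation*}
V_{\overline p,\overline k,f} \cap V_{\overline q,\overline l,g} \;=\; V_{\overline r,\overline m,h}.
\end{equation*}
The forward direction uses the trivial fact that agreement of $m_r$-jets implies agreement of $j$-jets for all $j\le m_r$, so a function matching $h$ up to order $m_r$ at a common point $p_i=q_j$ matches $f$ up to order $k_i$ and $g$ up to order $l_j$. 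The reverse direction uses that $h$ itself matches $f$ to order $k_i$ at $p_i$ and $g$ to order $l_j$ at $q_j$, so at a common point any $k$ in the intersection matches $h$ to order $\max(k_i,l_j)=m_r$.

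The only genuine (and mild) obstacle is this bookkeeping at repeated points in the jet case; in all three topologies the needed consistency of the amalgamated data at repeated points follows automatically from the nonemptiness of the intersection, which is precisely why passing through a witness $h$ rather than trying to amalgamate $f$ and $g$ directly makes the argument go through cleanly.
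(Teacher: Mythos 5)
Your proof is correct, and it takes a genuinely different route from the paper's. The paper amalgamates the prescribed data directly: assuming first that all the points of $\overline p_1,\ldots,\overline p_n$ are pairwise distinct, it picks a finite-dimensional $\mathbb R^k$ through which all the $f_i$ factor and constructs a single function $f$ realizing the prescribed germs (resp.\ jets, values) at all the projected points, so that $\bigcap_i U_{\overline p_i,f_i}=U_{\coprod_i\overline p_i,f}$; coincident points are then handled by a separate case split (if the germs there disagree the intersection is empty, otherwise one duplicate point is dropped). Your device of taking the center of the amalgamated basic set to be an arbitrary witness $h$ of the (assumed nonempty) intersection eliminates both steps at once: no interpolating function has to be constructed, and consistency at repeated points is automatic, since $h$ simultaneously realizes all the prescribed germs, jets, or values. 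What you give up is only the explicit description of the intersection in terms of the original $f_i$, together with the fact (implicit in the paper's construction) that basic opens based at pairwise distinct points always meet; what you gain is an argument needing nothing beyond the transitivity of ``same germ/jet/value at $p$'' --- in particular no bump functions or interpolation. Your bookkeeping $m_r=\max(k_i,l_j)$ in the jet case is exactly right, since agreement of $m$-jets implies agreement of all lower-order jets, and the reduction to binary intersections is harmless because the intersection of two basic sets is again a basic set (or $\emptyset$, which is in the family by convention).
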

\begin{proof} Let $U_{\overline{p}_1,f_1},\ldots,U_{\overline{p}_n,f_n}\in\mathfrak U$. There are two cases. First, if all points in $\overline{p}_1,\ldots,\overline{p}_n$ are pairwise distinct, there is $k\in\mathbb Z_{\geq 0}$, s.t. each one of $f_1,\ldots,f_n$ factors through $\mathbb R^k$, and projections of all points to $\mathbb R^k$ are pairwise distinct. Then there is $f:\mathbb R^k\rightarrow\mathbb R$, s.t. $f_{p_{i,j}}=(f_i)_{p_{i,j}}$ $\forall i,j$, and therefore 
	\begin{equation}\underset{1\leq i\leq n}\bigcap U_{\overline{p}_i,f_i}=U_{\underset{1\leq i\leq n}\coprod\overline{p}_i,f}.\end{equation}
Suppose there are two equal points. Compare the corresponding functions at the point. If the germs are different, intersection is empty. If the germs are equal, one point can be eliminated. Similarly for the other two topologies.\end{proof}

\smallskip

Let \underline{$\tau^g$, $\tau^j$, $\tau^p$} be the germ, jet, and point topologies on $\underline{\mathcal C}^\infty$ respectively. 

\begin{proposition} Defined as above, $(\underline{\mathcal C}^\infty,\tau^g)$, $(\underline{\mathcal C}^\infty,\tau^j)$, $(\underline{\mathcal C}^\infty,\tau^p)$ are semi-topological equational theories.\end{proposition}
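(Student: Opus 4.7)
The plan is to verify the two conditions in the definition of a semi-topological equational theory for each of $\tau^g, \tau^j, \tau^p$. Condition 1 (the product decomposition) holds by construction, since the topology on $Hom(T^{\times^{|S_1|}}, T^{\times^{|S_2|}})$ is taken to be the product topology induced by the bases $\mathfrak U, \mathfrak V, \mathfrak W$ on $Hom(T^{\times^{|S_1|}}, T) = C^\infty(\mathbb R^{S_1})$. The real content lies in the separate continuity of (\ref{CompositionMap}); and by the product-topology description of the codomain, this reduces, for each $t_0 \in S_3$ and each basic open $B \subseteq C^\infty(\mathbb R^{S_1})$, to showing that the preimage of $\pi_{t_0}^{-1}(B)$ under $(\varphi, \psi) \mapsto g_{t_0} \circ \varphi$ is open in each variable separately, where I write $\varphi = (f_s)_{s \in S_2}$ and $\psi = (g_t)_{t \in S_3}$.

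The key observation driving both parts is uniform across the three topologies: the germ / $k$-jet / value of $g_{t_0} \circ \varphi$ at a point $q \in \mathbb R^{S_1}$ is determined by the corresponding datum of $g_{t_0}$ at $\varphi(q) \in \mathbb R^{S_2}$ together with that of $\varphi$ at $q$. For germs this follows from continuity of $\varphi$ (nearby points pull back to nearby points through the relevant finite-dimensional projections); for values it is tautological; for $k$-jets it is the Fa\`a di Bruno formula, which expresses the $k$-jet of a composition as a polynomial in the $k$-jets of the factors. I expect the jet case to be the main technical input; the other two are essentially formal.

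Given this, continuity in $\psi$ with $\varphi$ fixed is immediate: if $\psi_0$ lies in the preimage of $\pi_{t_0}^{-1}(U_{\overline q, h})$ (or its jet/point analogue), then pinning the germ / $k$-jet / value of the $t_0$-component at the finite set $\{\varphi(q_i)\}$ to equal those of $(\psi_0)_{t_0}$ produces a sub-basic open in $Hom(T^{\times^{|S_1|}}, T^{\times^{|S_3|}})$ that contains $\psi_0$ and lies in the preimage. For continuity in $\varphi$ with $\psi$ fixed, I would additionally invoke that $g_{t_0}$ factors through a projection $\mathbb R^{S_2} \to \mathbb R^F$ for some finite $F \subset S_2$, so that $g_{t_0} \circ \varphi$ depends only on the finitely many components $(f_s)_{s \in F}$; around any $\varphi_0$ in the preimage, the finite intersection over $s \in F$ of the sub-basic opens pinning the germs / jets / values of $f_s$ at $\{q_i\}$ to those of $(\varphi_0)_s$ is open and contained in the preimage. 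The three topologies are handled by the same argument, and the proof does not give joint continuity, which is exactly why the word \emph{semi} appears.
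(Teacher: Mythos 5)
Your proposal is correct and follows essentially the same route as the paper: reduce via the product topology to a single component of $T^{\times^{|S_3|}}$, then verify separate continuity on basic opens by pinning the germs/jets/values of $g$ at the image points $f(\overline{p})$ for one variable, and those of the finitely many relevant components of $f$ (using that $g$ factors through a finite-dimensional projection) at $\overline{p}$ for the other. The paper compresses the jet case into ``composition of germs is a well defined germ \dots similarly,'' where you correctly name Fa\`a di Bruno as the underlying fact; otherwise the two arguments coincide.
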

\begin{proof} Let $f\in C^\infty(\mathbb R^{S_1},\mathbb R^{S_2})$, $g\in C^\infty(\mathbb R^{S_2},\mathbb R^{S_3})$, and let $X\ni g\circ f$ be an open set with respect to any one of the $3$ topologies. Since $C^\infty(\mathbb R^{S_1},\mathbb R^{S_3})\simeq C^\infty(\mathbb R^{S_1})^{\times^{|S_3|}}$ as topological spaces, there are open sets $X_i\ni g_i\circ f$, $1\leq i\leq n$, s.t. $\pi_1^{-1}(X_1)\times\ldots\times\pi_n^{-1}(X_n)\subseteq X$. If we find open sets $Y_i\ni f$, $Z_i\ni g_i$, s.t. $g_i\circ Y_i\subseteq X_i$, $Z_i\circ f\subseteq X_i$, then clearly $(\underset{1\leq i\leq n}\bigcap\pi_i^{-1}(Z_i))\circ f\subseteq X$, $g\circ\underset{1\leq i\leq n}\bigcap Y_i\subseteq X$, and hence we can assume that $|S_3|=1$.

Let $f\in C^\infty(\mathbb R^{S_1},\mathbb R^{S_2})$, $g\in C^\infty(\mathbb R^{S_2})$, and consider $U_{\overline{p},h}$,where $h:=g\circ f$. By assumption, there is $m\geq 0$, s.t. $g$ factors through $\mathbb R^m$, let $f_1,\ldots,f_m$ be the corresponding projections of $f$. We claim that
	\begin{equation} g\circ\underset{1\leq i\leq m}\bigcap\pi_i^{-1}(U_{\overline{p},f_i})\subseteq U_{\overline{p},h},\quad 
	U_{f(\overline{p}),g}\circ f\subseteq U_{\overline{p},h}.\end{equation}
Both statements follow directly from the fact that composition of germs is a well defined germ. Similarly
	\begin{equation}g\circ\underset{1\leq i\leq m}\bigcap\pi_i^{-1}(V_{\overline{p},\overline{k},f_i})\subseteq V_{\overline{p},\overline{k},h},\quad 
	V_{f(\overline{p}),\overline{k},g}\circ f\subseteq V_{\overline{p},\overline{k},h},\end{equation}
and
	\begin{equation} g\circ\underset{1\leq i\leq m}\bigcap\pi_i^{-1}(W_{\overline{p},f_i})\subseteq W_{\overline{p},h},\quad 
	W_{f(\overline{p}),g}\circ f\subseteq W_{\overline{p},h}.\end{equation}\end{proof}

\smallskip

Moreover, a small alteration of the proof shows that $(\mathcal C^\infty,\tau^g)$, $(\mathcal C^\infty,\tau^j)$, $(\mathcal C^\infty,\tau^p)$ are topological algebraic theories, i.e. for the finite theories, composition is continuous, and not just separately continuous in each variable.

%\begin{proof} {\bf Germ topology:} Let $f\in C^\infty(\mathbb R^l,\mathbb R^m)$, $g\in C^\infty(\mathbb R^m)$, and let $U\ni g\circ f$ be an open set in the germ topology. There is $U_{\overline{p},g\circ f}\subseteq U$. We claim that $$U_{f(\overline{p}),g}\circ\underset{1\leq i\leq m}\prod U_{\overline{p},f_i}\subseteq U_{\overline{p},g\circ f}.$$ Indeed, if $h_i\in U_{\overline{p},f_i}$, then $h=f$ around each point in $\overline{p}$. Therefore, composition of $h$ with any function in $U_{f(\overline{p}),g}$ coincides with $g\circ f$ around $\overline{p}$.\\ {\bf Jet topology:} Let $f\in C^\infty(\mathbb R^l,\mathbb R^m)$, $g\in C^\infty(\mathbb R^m)$, and let $V\ni g\circ f$ be an open set in the jet topology. There is $V_{\overline{p},\overline{k},g\circ f}\subseteq V$. We claim that $$V_{f(\overline{p}),\overline{k},g}\circ\underset{1\leq i\leq m}\prod V_{\overline{p},\overline{k},f_i}\subseteq V_{\overline{p},\overline{k},g\circ f}.$$ Indeed, let $\phi\in V_{f(\overline{p}),\overline{k},g}$, $\psi_i\in V_{\overline{p},\overline{k},f_i}$. Then the $k_j$-jet of $\phi\circ\psi$ at $p_j$ depends only on the $k_j$-jet of $g$ at $f(p_j)$, and $k_j$-jets of $f_i$'s at $p_j$.\end{proof}

%%%%%%%%%%%%%%%%%%%%%%%%%%%%%%%%%%%%%%%%%%%%%%%%%%%%%%%%%%%%%%%%%%%%%%%%%%%%%%%%%%%%%%%%
\section{Natural topologies on rings of smooth functions}\label{Algebras}

Presence of topology on an equational theory can be used to single out algebras, that have a particularly nice interaction with the topology. Let $(\mathfrak T,\tau)$ be a semi-topological equational theory, and let $A\in\mathfrak T(Set)$. Following \cite{KKM87}, we define \underline{natural topology} $\omega^\tau_A$ on $A$ to be the strongest topology, s.t. $\forall S\in Set$, $\forall\overline{a}\in A^{\times^{|S|}}$, the evaluation map
	\begin{equation}\label{EvaluationMap}\xymatrix{ev_{\overline{a}}:Hom(T^{\times^{|S|}},T)
	\ar[r]^{Id\times\overline{a}\quad} & Hom(T^{\times^{|S|}},T)\times A^{\times^{|S|}}
	\ar[r] & A}\end{equation}
is continuous.\footnote{Note that, different from \cite{KKM87}, we do not require $\omega_A$ to be compatible with the $\mathfrak T$-algebra structure on $A$ in any way. In \cite{KKM87}, in the case of $\mathcal C^\infty$, $(A,\omega_A)$ is required to be a locally convex, topological vector space.} As the following proposition shows, $\omega^\tau_A$ can be described explicitly, using a free resolution of $A$. The proof is straightforward (\cite{KKM87}).

\begin{proposition}\label{OrbitProperties} Let $(\mathfrak T,\tau)$ be a semi-topological equational theory.\begin{itemize} 
\item[1.] Let $A,B\in\mathfrak T(Set)$, and let $\omega^\tau_A,\omega^\tau_B$ be the natural topologies. Any morphism $\phi:A\rightarrow B$ in $\mathfrak T(Set)$ is continuous with respect to $\omega^\tau_A,\omega^\tau_B$. 
\item[2.] Let $\sim$ be a $\mathfrak T$-congruence on $A\in\mathfrak T(Set)$. Then $\omega^\tau_{A/\sim}=\omega^\tau_A/\sim$. 
\item[3.] For any $S\in Set$, restriction of $\tau$ to $Hom(T^{|S|},T)$ equals $\omega^\tau_{\mathfrak T(Set)}$.\end{itemize}\end{proposition}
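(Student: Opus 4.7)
My plan is to handle the three parts in order, each by exhibiting a candidate topology and then invoking the defining maximality property of $\omega^\tau_{(-)}$. Throughout I will use the key compatibility between $\mathfrak T$-algebra morphisms and evaluations: if $\phi:A\rightarrow B$ is in $\mathfrak T(Set)$, then naturality of $\phi$ gives $\phi\circ ev_{\overline a}=ev_{\phi\circ\overline a}$ for every $\overline a\in A^{\times^{|S|}}$ and every $t\in Hom(T^{\times^{|S|}},T)$. I also need to remember that ``strongest'' in the definition means finest, so a topology making all evaluations continuous is contained in $\omega^\tau_A$.

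For part 1, I would pull back $\omega^\tau_B$ along $\phi$ to form $\omega':=\{\phi^{-1}(U)\mid U\in\omega^\tau_B\}$, which is a topology on $A$. Using the intertwining identity above, $ev_{\overline a}^{-1}(\phi^{-1}(U))=ev_{\phi\circ\overline a}^{-1}(U)$, which is $\tau$-open by continuity of $ev_{\phi\circ\overline a}$. Hence $\omega'$ makes every evaluation on $A$ continuous, so by maximality $\omega'\subseteq\omega^\tau_A$, and this is precisely the continuity of $\phi$. For part 2, the inclusion $\omega^\tau_A/\!\sim\;\subseteq\omega^\tau_{A/\sim}$ follows by checking the quotient topology makes evaluations into $A/\!\sim$ continuous: lifting $\overline b\in (A/\!\sim)^{\times^{|S|}}$ to some $\overline a\in A^{\times^{|S|}}$, the intertwining identity gives $ev_{\overline b}=q\circ ev_{\overline a}$ where $q:A\rightarrow A/\!\sim$ is the quotient map, which is continuous as a composition. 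The reverse inclusion $\omega^\tau_{A/\sim}\subseteq\omega^\tau_A/\!\sim$ is immediate from part 1 applied to $q$.

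For part 3, write $F:=Hom(T^{\times^{|S|}},T)$ with its $\tau$-topology. For any $\overline a\in F^{\times^{|S'|}}\simeq Hom(T^{\times^{|S|}},T^{\times^{|S'|}})$, the evaluation $ev_{\overline a}:Hom(T^{\times^{|S'|}},T)\rightarrow F$ is the right-composition map $t\mapsto t\circ\overline a$, which is continuous by separate continuity of the composition map (\ref{CompositionMap}). Thus $\tau|_F$ makes every evaluation continuous, giving $\tau|_F\subseteq\omega^\tau_F$. For the reverse, take $\overline a$ to be the tuple of projections $\overline\pi=(\pi_s)_{s\in S}\in F^{\times^{|S|}}$, which corresponds to $Id_{T^{\times^{|S|}}}$; then $ev_{\overline\pi}(t)=t\circ\overline\pi=t$, so $ev_{\overline\pi}$ is the identity on $F$. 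Its required continuity from $(F,\tau|_F)$ to $(F,\omega^\tau_F)$ forces $\omega^\tau_F\subseteq\tau|_F$. The only real obstacle is keeping the direction of the inclusions straight; the clever move is recognizing in part 3 that evaluating on the generators turns the defining property of $\omega^\tau_F$ into the nontrivial inclusion.
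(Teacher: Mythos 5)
Your proof is correct. The paper gives no argument of its own for this proposition --- it simply declares the proof straightforward and cites [KKM87] --- so there is nothing to compare against, but your route is the standard (and surely intended) one: identify $\omega^\tau_A$ as the final topology for the family of evaluation maps, use the intertwining identity $\phi\circ ev_{\overline a}=ev_{\phi\circ\overline a}$ for parts 1 and 2, and for part 3 get one inclusion from separate continuity of composition in the operation variable and the other from evaluating at the tuple of projections, where $ev_{\overline\pi}$ becomes the identity map. All three steps check out, including the implicit fact that the finest topology making the evaluations continuous contains every other such topology.
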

We define \underline{$\mathfrak T_\tau(Set)\subseteq\mathfrak T(Set)$} to be the full subcategory, consisting of algebras, whose natural topology is Hausdorff. Such algebras will be called \underline{Hausdorff algebras}. The following proposition is straightforward (\cite{KKM87}).

\begin{proposition} Let $\mathfrak T'\subseteq\mathfrak T$ be a sub-theory, and suppose that $\mathfrak T'$ is dense in $\mathfrak T$ with respect to $\tau$. Let $A\in\mathfrak T(Set)$, $B\in\mathfrak T_\tau(Set)$, then any continuous $\mathfrak T'$-morphism $\phi:(A,\omega_A^\tau)\rightarrow(B,\omega_B^\tau)$ is a $\mathfrak T$-morphism.\end{proposition}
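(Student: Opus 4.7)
The plan is to reduce the statement to the familiar principle that two continuous maps into a Hausdorff space which agree on a dense subset must be equal. Fix any arity $S\in Set$, any $\omega\in Hom_{\mathfrak T}(T^{\times^{|S|}},T)$, and any tuple $\overline{a}\in A^{\times^{|S|}}$; to conclude that $\phi$ is a $\mathfrak T$-morphism I have to show $\phi(\omega(\overline{a}))=\omega(\phi(\overline{a}))$.

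For this, I would compare the two maps
\[
\phi\circ ev^A_{\overline{a}},\ ev^B_{\phi(\overline{a})}\colon Hom_{\mathfrak T}(T^{\times^{|S|}},T)\longrightarrow B.
\]
By the very definition of natural topology, $ev^A_{\overline{a}}$ is continuous into $(A,\omega^\tau_A)$ and $ev^B_{\phi(\overline{a})}$ is continuous into $(B,\omega^\tau_B)$; combined with the assumed continuity of $\phi$, both displayed maps are continuous when the source carries $\tau$ (which, by Proposition \ref{OrbitProperties}.3, coincides with the natural topology on the free algebra $\mathfrak T(S)$). Moreover, on the subspace $Hom_{\mathfrak T'}(T^{\times^{|S|}},T)\subseteq Hom_{\mathfrak T}(T^{\times^{|S|}},T)$ these two maps agree: for every $\omega'\in\mathfrak T'$, the fact that $\phi$ is a $\mathfrak T'$-morphism gives
\[
\phi\bigl(ev^A_{\overline{a}}(\omega')\bigr)=\phi(\omega'(\overline{a}))=\omega'(\phi(\overline{a}))=ev^B_{\phi(\overline{a})}(\omega').
\]

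Now I invoke the two remaining hypotheses: $Hom_{\mathfrak T'}(T^{\times^{|S|}},T)$ is $\tau$-dense in $Hom_{\mathfrak T}(T^{\times^{|S|}},T)$ because $\mathfrak T'$ is dense in $\mathfrak T$, and $(B,\omega^\tau_B)$ is Hausdorff because $B\in\mathfrak T_\tau(Set)$. Two continuous maps into a Hausdorff space that agree on a dense subset of their domain agree everywhere, so $\phi\circ ev^A_{\overline{a}}=ev^B_{\phi(\overline{a})}$ on all of $Hom_{\mathfrak T}(T^{\times^{|S|}},T)$. Evaluating at the chosen $\omega$ yields $\phi(\omega(\overline{a}))=\omega(\phi(\overline{a}))$, and since $S$, $\omega$ and $\overline{a}$ were arbitrary, $\phi$ preserves every $\mathfrak T$-operation, hence is a $\mathfrak T$-morphism.

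The only genuinely delicate point — and the step I would double-check with care — is the identification ``$\tau$-topology on the hom-set $=$ natural topology on the corresponding free algebra''. This is precisely what Proposition \ref{OrbitProperties}.3 provides; without it the density assumption would not connect to the continuity of $ev^A_{\overline{a}}$ and $ev^B_{\phi(\overline{a})}$, and the whole argument would collapse. Everything else is formal: the proof uses the $\mathfrak T'$-linearity of $\phi$ only on the dense subset, and uses Hausdorffness of the target only to upgrade the equality from the dense subset to the whole of $\mathfrak T$.
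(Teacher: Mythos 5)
Your argument is correct and is precisely the standard one the paper has in mind when it declares this proposition ``straightforward'' and omits the proof: both $\phi\circ ev^A_{\overline{a}}$ and $ev^B_{\phi(\overline{a})}$ are continuous on $(Hom(T^{\times^{|S|}},T),\tau)$ by the definition of the natural topology together with the assumed continuity of $\phi$, they agree on the dense subset coming from $\mathfrak T'$, and Hausdorffness of $\omega^\tau_B$ forces them to agree everywhere. Your one flagged worry is in fact harmless --- the argument only needs the defining property of $\omega^\tau_A$ (continuity of evaluation maps from the $\tau$-topology), so the identification of $\tau$ with the natural topology on the free algebra is not essential to the proof.
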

%\begin{proof} Suppose not. Let $\overline{a}\in A^{\times^{|S|}}$, and let $\alpha\in Hom_{\mathfrak T}(T^{\times^{|S|}},T)$, s.t. \begin{equation}\phi(\alpha(\overline{a}))\neq\alpha(\phi(\overline{a})).\end{equation} Since $(B,\omega_B)$ is Hausdorff, there are open sets $X,Y\subseteq B$, containing $\phi(\alpha(\overline{a}))$, $\alpha(\phi(\overline{a}))$ respectively, s.t. $X\cap Y=\emptyset$. By assumption $ev_{\overline{a}}^{-1}(\phi^{-1}(X))$, $ev_{\phi(\overline{a})}^{-1}(Y)$ are open sets in $Hom_{\mathfrak T}(T^{\times^{|S|}},T)$, and both contain $\alpha$. Therefore, there is $\beta\in Hom_{\mathfrak T'}(T^{\times^{|S|}},T)\cap ev_{\overline{a}}^{-1}(\phi^{-1}(X))\cap ev_{\phi(\overline{a})}^{-1}(Y)$. Since $\phi$ is a $\mathfrak T'$-morphism, $\beta(\phi(\overline{a}))=\phi(\beta(\overline{a}))$, and hence $\beta(\phi(\overline{a}))\in X\cap Y$, contradiction.\end{proof}

Let $\mathbb P$ be \underline{the theory of real polynomial functions}, i.e. objects of $\mathbb P$ are $\{\mathbb R^n\}_{n\geq 0}$, and morphisms are polynomial maps. Using multivariate Hermite interpolation, one shows that $\underline{\mathbb P}$ is dense in $\underline{\mathcal C}^\infty$ with respect to the jet topology. Therefore, any continuous $\mathbb R$-algebra morphism $(A,\omega_A^{\tau^j})\rightarrow(B,\omega_B^{\tau^j})$, with $\omega_B^{\tau^j}$ being Hausdorff, is automatically a $\mathcal C^\infty$-morphism. In Proposition \ref{FinalResult} we will see, that any $\mathbb R$-algebra morphism into such $B$ is continuous.

\smallskip

Now we would like to understand which $\mathcal C^\infty$-rings are Hausdorff with respect to the $3$ topologies that we have defined. First we note that every $\mathcal C^\infty$-ring is an abelian group. We will say that an equational theory $\mathfrak T$ \underline{contains the theory of groups}, if $T$ is a group object in $\mathfrak T$. So, $\underline{\mathcal C}^\infty$ contains the theory of groups. 

If $T\in\mathfrak T$ is a group object, every $\mathfrak T$-algebra is a group. If, in addition, $(\mathfrak T,\tau)$ is a semi-topological equational theory, separate continuity of (\ref{CompositionMap}) implies, that $\forall S\in Set$ all $\mathfrak T$-operations on $\mathfrak T(S)$ are continuous with respect to $\omega^\tau_{\mathfrak T(S)}$, in particular $\mathfrak T(S)$ is a topological group. It follows then, that any $\mathfrak T$-congruence on $\mathfrak T(S)$ is an open relation, and hence $\forall A\in\mathfrak T(Set)$ is a topological group with respect to $\omega^\tau_A$. Therefore, $A$ is Hausdorff, if and only if $A$ is a quotient of a free $\mathfrak T$-algebra by a closed normal subgroup.

So, a $\mathcal C^\infty$-ring is Hausdorff, if and only if it is quotient of a free $\mathcal C^\infty$-ring by a closed ideal. 
%\begin{proposition} Congruences on $\mathcal C^\infty$-rings are the same as $\mathbb R$-algebra ideals.\end{proposition}\begin{proof} Given a $\mathcal C^\infty$-congruence on $A$, define $\mathfrak I$ to consist of all $a\in A$, s.t. $a\sim 0$. Clearly $\mathfrak I$ is an ideal. Conversely, let $\mathfrak I\subset A$ be an ideal. Define $a\sim b$ to be $a-b\in\mathfrak I$. We claim that $\sim$ is a $\mathcal C^\infty$-congruence on $A$. We have to prove that for any sequences $\overline{a},\overline{b}\in A^{\times^n}$, s.t. $a_i-b_i\in\mathfrak I$, and for any $\alpha\in C^\infty(\mathbb R^n)$, we have $\alpha(\overline{a})-\alpha(\overline{b})\in\mathfrak I$. To show this, choose a set of generators for $A$ (not necessarily finite) and choose a finite subset $S$, s.t. $\overline{a},\overline{b}$ are in the $\mathcal C^\infty$-subalgebra $B$, generated by $S$. We are reduced to proving that $\alpha(\overline{a})-\alpha(\overline{b})\in\mathfrak I\cap B$. Since $B$ is finitely generated, this follows from \cite{MR91}.\end{proof}
Hence, to understand Hausdorff $\mathcal C^\infty$-rings we need to understand closed ideals. Let $S\in Set$, following \cite{Du81}, \cite{MR91}, we define an ideal $\mathfrak I\subseteq C^\infty(\mathbb R^S)$ to be \underline{germ determined},  \underline{near-point determined}, or \underline{point determined}, if for any $f\in C^\infty(\mathbb R^S)$
	\begin{equation}\label{Germ} \forall p\in\mathbb R^S\text{ }\exists g\in\mathfrak I\text{ s.t. }f_p=g_p\text{ implies } f\in\mathfrak I,\end{equation}
	\begin{equation}\label{Jet} \forall p\in\mathbb R^S\text{ }\forall k\in\mathbb Z_{\geq 0}\text{ }\exists g\in\mathfrak I\text{ s.t. }J_p^k(f)=J_p^k(g)\text{ implies } f\in\mathfrak I,\end{equation}
	\begin{equation}\label{Point} \forall p\in\mathbb R^S\text{ }\exists g\in\mathfrak I\text{ s.t. }f(p)=g(p)\text{ implies } f\in\mathfrak I,\end{equation}
respectively. Note, that if $\exists g\in\mathfrak I$, s.t. $g(p)\neq 0$, then $\exists h\in\mathfrak I$, s.t. $h_p=1_p$, and hence such $p$ is irrelevant for checking conditions (\ref{Germ}), (\ref{Jet}), (\ref{Point}). In other words, everything depends on the points in $\mathbb R^S$, which are zeroes of $\mathfrak I$. In particular, if $\mathfrak I$ has no zeroes, it has to be all of $C^\infty(\mathbb R^S)$. One also sees that for a finite $S$, these definitions coincide with the ones in \cite{MR91}.

\begin{proposition} For any $S\in Set$, an ideal $\mathfrak I\subseteq C^\infty(\mathbb R^S)$ is closed with respect to the germ, jet, or point topology, if and only if it is germ determined, near-point determined, or point determined respectively.\end{proposition}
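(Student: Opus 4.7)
The plan is to handle the three pairs (germ/germ-determined, jet/near-point-determined, point/point-determined) uniformly, since the same two-step argument works throughout. First I would identify the neighborhood basis at a point $f \in C^\infty(\mathbb R^S)$: a basic open $U_{\bar p, h}$ contains $f$ iff $h_{p_i} = f_{p_i}$ for every $i$, in which case $U_{\bar p, h} = U_{\bar p, f}$, so the germ-neighborhoods of $f$ are exactly the sets $U_{\bar p, f}$ as $\bar p$ ranges over finite subsets of $\mathbb R^S$ (and analogously for $\tau^j$ and $\tau^p$). Consequently $f \in \overline{\mathfrak I}$ is equivalent to the statement that for every finite tuple of data there exists a single $g \in \mathfrak I$ simultaneously matching $f$ in germ (resp.\ $k$-jet, value) at each designated point.

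The implication ``determined $\Rightarrow$ closed'' is then immediate: given $f \in \overline{\mathfrak I}$, for each single point $p$ the one-point neighborhood $U_{\{p\}, f}$ meets $\mathfrak I$, producing some $g_p \in \mathfrak I$ with $(g_p)_p = f_p$, and the germ-determined hypothesis (\ref{Germ}) then forces $f \in \mathfrak I$. The near-point and point cases are identical after replacing the matching criterion with $J^{k}_p$ or evaluation at $p$.

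For the converse ``closed $\Rightarrow$ determined'' the actual content is promoting single-point matching data to multi-point matching data. Given $g_1, \ldots, g_n \in \mathfrak I$ with $g_i$ matching $f$ at $p_i$, I would pick a single finite-dimensional $\mathbb R^m \subseteq \mathbb R^S$ through which $f$ and all the $g_i$ factor, enlarged if necessary along coordinates where the distinct $p_i$ differ so that the projections $\pi(p_i)$ are pairwise distinct, and then choose smooth cutoffs $\chi_i$ on $\mathbb R^m$ equal to $1$ on a neighborhood of $\pi(p_i)$ and vanishing on neighborhoods of the other $\pi(p_j)$. Then $g := \sum_i (\chi_i \circ \pi)\, g_i$ lies in $\mathfrak I$ because $\mathfrak I$ is a $\mathcal C^\infty$-ideal, and it agrees with $g_i$ (hence with $f$) in germ at $p_i$; since the cutoff is locally constant near each $\pi(p_i)$ the same $g$ also matches $f$ in the $k_i$-jet and in value at each $p_i$, so one construction serves all three topologies at once. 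This shows $f \in \overline{\mathfrak I}$, and closedness delivers $f \in \mathfrak I$.

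The main obstacle is the bookkeeping in this last step: the possibility that two distinct $p_i$ project to the same point on a given $\mathbb R^m$ must be accommodated, either by enlarging $\mathbb R^m$ along coordinates separating the points, or by observing that coincidence of projections collapses the corresponding conditions (as already seen in the proof that $\mathfrak U$, $\mathfrak V$, $\mathfrak W$ are closed under finite intersections, where an extra point is either discarded or the neighborhood is empty). One also needs to verify that the disjoint-support cutoffs really deliver germ, jet, and value agreement simultaneously, which is immediate once the $\chi_i$ are taken to be identically $1$ (resp.\ $0$) on small balls around $\pi(p_i)$ (resp.\ $\pi(p_j)$); the rest is routine ideal-theoretic bookkeeping.
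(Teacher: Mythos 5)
Your proof is correct and its crux --- gluing the single-point approximants $g_i$ into one $g\in\mathfrak I$ by means of disjointly supported cutoffs pulled back from a finite-dimensional $\mathbb R^m$ that separates the projections of the $p_i$ --- is exactly the partition-of-unity step at the heart of the paper's proof. The only difference is presentational: you read off the condition $f\in\overline{\mathfrak I}$ directly from the neighborhood basis $\{U_{\overline p,f}\}$ (resp.\ $\{V_{\overline p,\overline k,f}\}$, $\{W_{\overline p,f}\}$) at $f$, whereas the paper first establishes an equivalent but more roundabout description of the closure operator in terms of finite decompositions of $\mathfrak I$ and then argues by contradiction.
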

\begin{proof} Closure operator for the jet topology is as follows:  let $X\subseteq C^\infty(\mathbb R^S)$ and let $f\in C^\infty(\mathbb R^S)$, then $f\in\overline{X}$, if and only if for any finite decomposition $X=\underset{1\leq i\leq m}\coprod X_i$, there is at least one $X_i$, s.t. $\forall p\in\mathbb R^S$, $\forall k\in\mathbb Z_{\geq 0}$ there is $g\in X_i$, s.t. $J^k_p(g)=J^k_p(f)$. Indeed, suppose that $f\in\overline{X}$ for the jet topology. Let $X=\underset{1\leq i\leq m}\coprod X_i$ be a finite decomposition, and suppose for any $i$, there is $p_i\in\mathbb R^S$, and $k_i\in\mathbb Z_{\geq 0}$, s.t. $\nexists g\in X_i$ with $J^{k_i}_{p_i}(g)=J^{k_i}_{p_i}(f)$. Let $\overline{p}:=\{p_1,\ldots,p_m\}$, $\overline{k}:=\{k_1,\ldots,k_m\}$, and consider $V_{\overline{p},\overline{k},f}$ from (\ref{JetBasis}). Clearly $f\in V_{\overline{p},\overline{k},f}$, yet $V_{\overline{p},\overline{k},f}\cap X=\emptyset$. Therefore $f\notin\overline{X}$, contradiction.

Now suppose that $f\notin\overline{X}$, i.e. there are $\overline{p}\in(\mathbb R^S)^{\times^m}$, $\overline{k}\in(\mathbb Z_{\geq 0})^{\times^m}$ s.t. $V_{\overline{p},\overline{k},f}\cap X=\emptyset$. For any $Y\subseteq\{1,\ldots,m\}$ we define
	$$X_Y:=\{g\in X\text{ s.t. }\forall i\in Y\text{ }J_{p_i}^{k_i}(g)=J_{p_i}^{k_i}(f)\text{ and }\forall i\notin Y\text{ }J_{p_i}^{k_i}(g)\neq J_{p_i}^{k_i}(f)\}.$$
Clearly $X=\underset{Y}\coprod X_Y$, and $\forall Y\neq\{1,\ldots,m\}$ $\exists i$, s.t. $\nexists g\in X_Y$ with $J_{p_i}^{k_i}(g)=J_{p_i}^{k_i}(f)$. On the other hand, $X_{\{1,\ldots,m\}}=X\cap V_{\overline{p},\overline{k},f}=\emptyset$.

\smallskip

Let $\mathfrak I\subseteq C^\infty(\mathbb R^S)$ be an ideal, closed with respect to the jet topology. We claim that $\mathfrak I$ is near-point determined. Let $f$ be s.t. $\forall p\in\mathbb R^S$, $\forall k\in\mathbb Z_{\geq 0}$ $\exists g\in\mathfrak I$, s.t. $J^k_p(f)=J^k_p(g)$. We claim that $f\in\mathfrak I$. Suppose not. Then there is a finite decomposition $\mathfrak I=\underset{1\leq i\leq m}\coprod\mathfrak I_i$, s.t. for any $1\leq i\leq m$ $\exists p_i\in\mathbb R^S$ $\exists k_i\in\mathbb Z_{\geq 0}$, s.t. $\forall g\in\mathfrak I_i$ $J_{p_i}^{k_i}(g)\neq J_{p_i}^{k_i}(f)$. By assumption, $\forall i$ $\exists g_i\in\mathfrak I$, s.t. $J^{k_i}_{p_i}(g_i)=J^{k_i}_{p_i}(f)$. Since $\{f,g_1,\ldots,g_m\}$ is a finite set of functions, there is $\mathbb R^n\subseteq\mathbb R^S$, s.t. all of them factor through the projection to $\mathbb R^n$. Using partition of unity on $\mathbb R^n$, we can glue $g_i$'s into one $g\in\mathfrak I$, s.t. $\forall i$ $J^{k_i}_{p_i}(g)=J^{k_i}_{p_i}(f)$, which contradicts existence of the decomposition $\mathfrak I=\underset{1\leq i\leq m}\coprod\mathfrak I_i$ above.

Let $\mathfrak I$ be a near-point determined ideal in $C^\infty(\mathbb R^S)$. We claim that $\mathfrak I$ is closed in the jet topology. Let $f\in\overline{\mathfrak I}$, then $\forall p\in\mathbb R^S$, $\forall k\in\mathbb Z_{\geq 0}$, there is $g\in\mathfrak I$, s.t. $J^k_p(g)=J^k_p(f)$ (decomposition $\mathfrak I=\mathfrak I$). Since $\mathfrak I$ is near-point determined, $f\in\mathfrak I$.

The cases of germ or point topologies are proved in exactly the same manner.\end{proof}

\smallskip

Changing topologies on a given equational theory changes the corresponding categories of Hausdorff algebras. The following proposition shows that this change is well behaved. The proof is straightforward.
\begin{proposition} Let $(\mathfrak T,\tau)$, $(\mathfrak T,\tau')$ be semi-topological equational theories, s.t. $\tau\leq\tau'$. Then $\mathfrak T_\tau(Set)\subseteq\mathfrak T_{\tau'}(Set)$ as a full, reflective subcategory.\end{proposition}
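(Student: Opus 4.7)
The plan is to prove three things separately: the inclusion $\mathfrak T_\tau(Set) \subseteq \mathfrak T_{\tau'}(Set)$ at the level of objects, fullness of this inclusion, and existence of a left adjoint (the reflector).

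First I would establish the inclusion via the key observation that the natural topologies themselves are comparable: $\omega^\tau_A \leq \omega^{\tau'}_A$ for every $A\in\mathfrak T(Set)$. Indeed, if $U\subseteq A$ is open in $\omega^\tau_A$, then for each tuple $\overline a$ the preimage $ev_{\overline a}^{-1}(U)$ is open in $(Hom(T^{\times^{|S|}},T),\tau)$; since $\tau\leq\tau'$, the same preimage is open in the finer topology $\tau'$, so $U$ satisfies the defining continuity requirement of $\omega^{\tau'}_A$. Hausdorffness is inherited from a coarser topology to any finer one, so $A\in\mathfrak T_\tau(Set)$ implies $A\in\mathfrak T_{\tau'}(Set)$. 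Fullness is automatic, as both categories are defined as full subcategories of the common ambient category $\mathfrak T(Set)$.

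For reflectivity, given $A\in\mathfrak T_{\tau'}(Set)$, I would define the reflection as $R(A):=A/N$, where $N$ is the closure of $\{0\}$ in the $\omega^\tau_A$-topology. The paper already establishes (just before this proposition) that every $\mathfrak T$-algebra is a topological group in its natural topology, so $N$ is a closed normal subgroup and $A/N$ is Hausdorff as a topological group. Proposition \ref{OrbitProperties}(2) then identifies the natural $\omega^\tau$-topology on $A/N$ with the quotient topology, placing $R(A)$ in $\mathfrak T_\tau(Set)$. For the universal property: any $\mathfrak T$-morphism $\phi:A\to B$ with $B\in\mathfrak T_\tau(Set)$ is $\omega^\tau$-continuous by Proposition \ref{OrbitProperties}(1), so $\phi(N)\subseteq\overline{\phi(\{0\})}=\overline{\{0\}_B}=\{0\}_B$, forcing a unique factorization $A\to R(A)\to B$ through the quotient.

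The main technical obstacle is verifying that $N$ is genuinely a $\mathfrak T$-congruence and not merely an additive subgroup --- the equational theory may contain operations with arbitrarily many inputs, and composition is only separately continuous in $(\mathfrak T,\tau)$. I would dispose of this by a telescoping argument: given $a_i,b_i\in A$ with $a_i-b_i\in N$ for each index $i$, and any $\mathfrak T$-operation $f$ factoring through finitely many coordinates, expand $f(\overline a)-f(\overline b)$ as a finite sum of single-coordinate increments $f(\ldots,a_i,\ldots)-f(\ldots,b_i,\ldots)$. Each such increment is a continuous function of the varying coordinate by separate continuity; since translation in the topological group $A$ is a homeomorphism, the value at $a_i\in b_i+N$ lies in the $\omega^\tau_A$-closure of $\{0\}$, i.e.\ in $N$. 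A finite sum of elements of $N$ is still in $N$, which confirms that $A/N$ is a genuine $\mathfrak T$-algebra and completes the construction.
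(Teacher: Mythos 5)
The paper offers no argument for this proposition (it is simply declared ``straightforward''), so your proposal has to stand on its own. Its first two thirds do: since $\omega^\tau_A$ is the final topology for the family $\{ev_{\overline a}\}$, your observation that every $\omega^\tau_A$-open set has $\tau$-open, hence $\tau'$-open, preimages gives $\omega^\tau_A\subseteq\omega^{\tau'}_A$ at once; a topology finer than a Hausdorff one is Hausdorff, and fullness is indeed automatic because both categories are full in $\mathfrak T(Set)$.

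The reflectivity argument, however, proves less than the proposition asserts, because it quietly imports two hypotheses that are not in the statement. First, the construction $R(A)=A/\overline{\{0\}}$ and the whole topological-group apparatus presuppose that $T$ is a group object in $\mathfrak T$; the paper invokes that structure only for theories ``containing the theory of groups'', whereas the proposition concerns an arbitrary semi-topological equational theory. Second, your telescoping step explicitly restricts to operations ``factoring through finitely many coordinates''. That is true in $\underline{\mathcal C}^\infty$ by the very definition of a smooth function on $\mathbb R^S$, but it fails for general equational theories in Linton's sense --- $Hom(T^{\times^{|S|}},T)\simeq\mathfrak T(S)$ need not be the union of the subalgebras generated by finite subsets of $S$ (think of infinitary suprema in complete sup-lattices) --- and then the finite sum of single-coordinate increments never reaches $f(\overline a)-f(\overline b)$. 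So as written the argument covers the application to $\mathcal E\subset\mathcal F\subset\mathcal G\subset\mathcal L$ but not the proposition in the generality in which it is stated. Both gaps disappear if you build the reflector without touching the operations at all: by Proposition \ref{OrbitProperties} every $\mathfrak T$-morphism is continuous for the natural topologies, so the natural topology on a small product of Hausdorff algebras refines the (Hausdorff) product topology, and a subalgebra of a Hausdorff algebra is Hausdorff because it injects continuously into one. Hence for $A\in\mathfrak T_{\tau'}(Set)$ the set of congruences $\sim$ with $A/\sim\in\mathfrak T_\tau(Set)$ is nonempty and closed under intersections (embed $A/\bigcap\sim_i$ into $\prod_i A/\sim_i$), so it has a least element $\sim^*$, and $A\mapsto A/\sim^*$ is the left adjoint by exactly the factorization argument you give at the end. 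When a group object is present, $\sim^*$ is of course your $\overline{\{0\}}$, so your construction is the right one for $\underline{\mathcal C}^\infty$ --- it just is not a proof of the general statement.
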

Let \underline{$\mathcal E,\mathcal F,\mathcal G$} be the categories of Hausdorff $\mathcal C^\infty$-rings with respect to point, jet, and germ topologies respectively. It is immediate to notice that
	$$\text{point topology }<\text{ jet topology }<\text{ germ topology }<\text{ discrete topology,}$$
hence we have the corresponding sequence of full reflective subcategories
	\begin{equation}\mathcal E\subset \mathcal F\subset\mathcal G\subset\mathcal L.\end{equation}

\begin{proposition}\label{FinalResult} Let $A\in\mathcal L$, $B\in\mathcal F$. Any $\mathbb R$-algebra morphism $\phi:A\rightarrow B$ is a $\mathcal C^\infty$-morphism.\footnote{Note that, different from \cite{GS03}, we do not require the rings to be finitely generated.}\end{proposition}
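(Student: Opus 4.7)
The plan is to establish that $\phi$ is continuous with respect to the natural jet topologies, and then to invoke the density of $\underline{\mathbb{P}}$ in $(\underline{\mathcal{C}}^\infty,\tau^j)$, recorded in the paragraph preceding Proposition \ref{FinalResult}, to conclude that $\phi$ is a $\mathcal{C}^\infty$-morphism. Using Proposition \ref{OrbitProperties}(2) to pull back through a free presentation $q_A:C^\infty(\mathbb{R}^S)\twoheadrightarrow A$, I may assume $A=C^\infty(\mathbb{R}^S)$ carries the jet topology. Write $B=C^\infty(\mathbb{R}^T)/\mathfrak{J}$ for a closed (near-point determined) ideal $\mathfrak{J}$. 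Since the theory contains groups, both rings are topological groups in their natural topologies, so continuity reduces to continuity at $0$; moreover $q_B$ is an open quotient, and a partition-of-unity argument on $\mathbb{R}^T$ identifies the basic open neighborhoods of $0\in B$ with finite intersections $\bigcap_i\mathfrak{n}_{p_i,k_i}$, where $\mathfrak{n}_{p,k}:=q_B(\mathfrak{m}_p^{k+1})$. Hence it suffices to show that each $\phi^{-1}(\mathfrak{n}_{p,k})$ is open in $C^\infty(\mathbb{R}^S)$.

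The core step is a Taylor-plus-nilpotency computation in the quotient $B_{p,k}:=B/\mathfrak{n}_{p,k}$. I may assume $p$ is a zero of $\mathfrak{J}$, since otherwise $B_{p,k}=0$. Then $B_{p,k}$ is an $\mathbb{R}$-algebra whose maximal ideal is nilpotent of order at most $k+1$. Writing the induced map $\phi_{p,k}:C^\infty(\mathbb{R}^S)\to B_{p,k}$ on generators as $\phi_{p,k}(x_s)=r_s+n_s$ with $r_s\in\mathbb{R}$ and $n_s$ in the maximal ideal, set $r:=(r_s)\in\mathbb{R}^S$. For any $f\in C^\infty(\mathbb{R}^S)$, factoring through a finite $\mathbb{R}^n$ and Taylor-expanding at the corresponding projection of $r$ with remainder of order $k+1$, one computes
\[\phi_{p,k}(f)=\sum_{|\alpha|\leq k}\frac{\partial^\alpha f(r)}{\alpha!}\,n^\alpha,\]
because every remainder monomial $n^\beta$ with $|\beta|=k+1$ vanishes in $B_{p,k}$. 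Thus $\phi_{p,k}(f)$ depends only on $J_r^k(f)$, so $\mathfrak{m}_r^{k+1}\subseteq\ker(\phi_{p,k})=\phi^{-1}(\mathfrak{n}_{p,k})$. Since $\mathfrak{m}_r^{k+1}$ is a basic open additive subgroup in the jet topology and $\ker(\phi_{p,k})$ is an additive subgroup containing it, the latter is open. Finite intersections of such preimages yield $\phi^{-1}$ of every basic neighborhood of $0\in B$, so $\phi$ is continuous at $0$, hence continuous.

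The main obstacle is the Taylor-plus-nilpotency computation above, together with the partition-of-unity lifting needed to identify $\bigcap_i\mathfrak{n}_{p_i,k_i}$ as a neighborhood basis of $0\in B$. Near-point determinedness of $\mathfrak{J}$ enters only at the very end, via the density of $\underline{\mathbb{P}}$: continuity of $\phi$ together with Hausdorffness of $B$ in the jet topology upgrades $\phi$ from a continuous $\mathbb{R}$-algebra morphism to a genuine $\mathcal{C}^\infty$-morphism.
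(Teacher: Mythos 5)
Your proof is correct, and it executes the continuity-plus-density strategy that the paper announces in the paragraph preceding Proposition \ref{FinalResult} but does not literally follow in its written proof. The paper instead argues directly by contradiction: it reduces to $A=C^\infty(\mathbb R^n)$ finitely generated free, invokes near-point determinedness at the outset to produce a pair $(p,k)$ at which the discrepancy $[f(\phi(\overline{x}))]-[\phi(f)]$ survives modulo $\mathfrak I+\mathfrak m_p^k$, performs a coordinate shift so that the $[\phi(x_i)]$ vanish at $p$, and then kills the discrepancy with the Taylor decomposition $f=\rho+q_k\alpha$. The essential computation is the same in both arguments --- an $\mathbb R$-algebra morphism commutes with the Taylor polynomial, and the order-$k$ remainder is annihilated modulo $\mathfrak I+\mathfrak m_p^{k+1}$ --- but your architecture differs in two useful ways. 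First, by Taylor-expanding at the point $r$ of residues of the $\phi(x_s)$ in the local ring $B_{p,k}$ rather than at the origin, you absorb the paper's coordinate-shift step. Second, you cleanly separate the roles of the hypotheses: your continuity argument works for an arbitrary quotient $B=C^\infty(\mathbb R^T)/\mathfrak J$, so it actually shows that every $\mathbb R$-algebra morphism of $\mathcal C^\infty$-rings is continuous for the natural jet topologies, with near-point determinedness entering only through Hausdorffness in the final density step; this isolates a statement of independent interest. The price is that your route leans on the density of $\underline{\mathbb P}$ in $(\underline{\mathcal C}^\infty,\tau^j)$ via Hermite interpolation and on the dense-subtheory proposition, both of which the paper states without detailed proof, whereas the paper's written argument is self-contained. (Your indexing, with $\mathfrak m_p^{k+1}$ denoting the functions with vanishing $k$-jet at $p$, is the standard one and differs only harmlessly from the paper's convention $\mathfrak m_p^k$.)
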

\begin{proof} Suppose not. Let $\phi:A\rightarrow B$ be an $\mathbb R$-algebra morphism, that is not a $\mathcal C^\infty$-morphism. We can assume $A$ to be a finitely generated, free $\mathcal C^\infty$-ring. Indeed, there are $a_1,\ldots,a_n\in A$ and $f\in C^\infty(\mathbb R^n)$, s.t. 
	\begin{equation}\label{SupposeNot}\phi(f(a_1,\ldots,a_n))\neq f(\phi(a_1),\ldots,\phi(a_n)).\end{equation}
Let $\pi:C^\infty(\mathbb R^n)\rightarrow A$ be the $\mathcal C^\infty$-morphism , defined by $x_i\mapsto a_i$. We know that $\pi(f)=f(a_1,\ldots,a_n)$.
Therefore, if  $\phi(\pi(f))=f(\phi(\pi(x_1),\ldots,\pi(x_n)))$, then $\phi(f(a_1,\ldots,a_n))=f(\phi(a_1),\ldots,\phi(a_n))$,
i.e. if $\phi$ is not a $\mathcal C^\infty$-morphism, neither is $\phi\circ\pi$. Therefore, we can assume that $A=C^\infty(\mathbb R^n)$. 

So now we have an $\mathbb R$-algebra morphism $\phi:C^\infty(\mathbb R^n)\rightarrow B$, and a function $f\in C^\infty(\mathbb R^n)$, s.t. 
	\begin{equation}\phi(f)\neq f(\phi(x_1),\ldots,\phi(x_n)).\end{equation}
Take $S=B$ as the set of generators, and present $B=C^\infty(\mathbb R^S)/\mathfrak I$. In this way every $b\in B$ becomes a function on $\mathbb R^S$, that we will denote by $[b]$. Then, since $\mathfrak I$ is near-point determined, $\exists p\in C^\infty(\mathbb R^S)$, $\exists k\in\mathbb Z_{\geq 0}$, s.t. 
	\begin{equation}\label{Difference}[f(\phi(x_1),\ldots,\phi(x_n))]-[\phi(f)]\notin\mathfrak I+\mathfrak m_p^k.\end{equation}

Suppose that $\forall i$ $[\phi(x_i)](p)=0$. Taking Taylor formula for $f$ around $0$, we get $f=\rho+q_k\alpha$, where $\rho$ is a polynomial, $q_k$ is a homogeneous polynomial of degree $k$, and $\alpha\in C^\infty(\mathbb R^n)$. Then, writing $\overline{x}$ for $x_1,\ldots,x_n$, we have
	\begin{equation}[f(\phi(\overline{x}))]-(\rho([\phi(\overline{x})])+q_k([\phi(\overline{x})])\alpha([\phi(\overline{x})]))\in\mathfrak I,\end{equation}
and, since $\phi$ is an $\mathbb R$-algebra morphism, we have
	\begin{equation}[\phi(f)]-(\rho([\phi(\overline{x})])+q_k([\phi(\overline{x})])[\phi(\alpha)])\in\mathfrak I.\end{equation}
Therefore, (\ref{Difference}) is equivalent to
	\begin{equation} q_k([\phi(\overline{x})])(\alpha([\phi(\overline{x})])-[\phi(\alpha)])\notin\mathfrak I+\mathfrak m_p^k,\end{equation}
which is impossible, since $q_k([\phi(\overline{x})])\in\mathfrak m_p^k$.

Suppose now that $[\phi(x_i)]$ do not necessarily vanish at $p$. We would like to find a coordinate change on $\mathbb R^n$, s.t. images of the new coordinates do vanish at $p$, and there is a $g\in C^\infty(\mathbb R^n)$, satisfying (\ref{Difference}). Here is how it is done: evaluating $[\phi(x_i)]$'s at $p$, we get an $\mathbb R$-algebra morphism $\mathbb R[x_1,\ldots,x_n]\rightarrow\mathbb R$, i.e. a point in $\mathbb R^n$. Let $\mathbb R^n\rightarrow\mathbb R^n$ be the shift, that moves the origin to this point. Since this shift is a smooth map, we get a $\mathcal C^\infty$-morphism $\nu:C^\infty(\mathbb R^n)\rightarrow C^\infty(\mathbb R^n)$. Let $\{y_i\}$ be the new coordinate system after the shift. Since the shift is an algebraic map, $\{\nu(y_i)\}$ are polynomials in $x_i$'s, which we will denote by $\{\nu_i\}$. Let $\psi:=\phi\circ\nu$, and let $g:=\nu^{-1}(f)$. We claim that
	\begin{equation}\label{ReDifference}[g(\psi(\overline{y}))]-[\psi(g)]\notin\mathfrak I+\mathfrak m^k_p.\end{equation}
Indeed, $\psi(y_i)=\phi(\nu(y_i))=\phi(\nu_i(\overline{x}))=\nu_i(\phi(\overline{x}))$, and since $g(\nu_1,\ldots,\nu_n)=\nu(g)=f$, we have that $g(\psi(\overline{y}))=f(\phi(\overline{x}))$. On the other hand $\psi(g)=\phi(\nu(g))=\phi(f)$, and (\ref{ReDifference}) becomes (\ref{Difference}). Clearly $\forall i$ $[\psi(y_i)](p)=0$.\end{proof}


\begin{thebibliography}{Bour}
\bibitem[BD80]{BD80} F.Borceux, B.Day. {\it Universal algebra in a closed category.} Journal of Pure and Applied Algebra 16 (1980), pp. 133-147. 
\bibitem[BV73]{BV73} J.M.Boardman, R.M.Vogt. {\it Homotopy invariant algebraic structures on topological spaces.} LNM 347, Springer, X+257 pages, (1973).
\bibitem[Du81]{Du81} E.J.Dubuc. {\it $C^\infty$-schemes.} American Journal of Mathematics, Vol. 103, No. 4, pp. 683-690 (1981).
\bibitem[GS03]{GS03} J.A.N. Gonz\'{a}lez, J.B.S. de Salas. {\it $C^\infty$-differentiable spaces.} Lecture Notes in Mathematics 1824 Springer, XIII+188 pages (2003).
\bibitem[Jo11]{Jo11} D.Joyce. {\it Algebraic geometry over $C^\infty$-rings.} math.AG/1001.0023v2
\bibitem[KKM87]{KKM87} G.Kainz, A.Kriegl, P.Michor. {\it $C^\infty$-algebras from the functional analytic view point.}  J. of Pure and App. Algebra 46, pp. 89-107 (1987).
\bibitem[La63]{La63} F.W.Lawvere. {\it Functorial semantics of algebraic theories.} Proc. Nat. Acad. Sci. U.S.A. 50, pp. 869-872 (1963).
\bibitem[Li66]{Li66} F.E.J. Linton. {\it Some aspects of equational theories.} In {\it Proc. conf. on categorical algebra at La Jolla.} Springer, pp. 84-95 (1966). 
\bibitem[MR91]{MR91} I.Moerdijk, G.E.Reyes. {\it Models for smooth infinitesimal analysis.} Springer, X+399 pages (1991).
\bibitem[Re75]{Re75} K.Reichard. {\it Nichtdifferenzierbare Morphismen differenzierbarer R\"{a}ume.} Manuscripta math. 15, pp. 243-250, (1975).
\bibitem[Sp10]{Sp10} D.I.Spivak. {\it Derived smooth manifolds.} Duke Mathematical Journal, Vol. 153, No. 1, pp. 55-128 (2010).
\bibitem[Wh48]{Wh48} H.Whitney. {\it On ideals of differentiable functions.} American Journal of Mathematics. Vol. 70, No. 3, pp. 635-658 (1948).
\end{thebibliography}
\end{document}